\documentclass{amsart}

\usepackage{amsmath, amsthm, amssymb,mathtools}
\usepackage{mathrsfs}
\usepackage[all]{xy}
\usepackage{paralist}
\usepackage[vcentermath]{youngtab}
\usepackage{tikz-cd}

\Yboxdim7pt

\setlength{\oddsidemargin}{0in}
\setlength{\evensidemargin}{0in}
\setlength{\marginparwidth}{0in}
\setlength{\marginparsep}{0in}
\setlength{\marginparpush}{0in}
\setlength{\topmargin}{0.3in}
\setlength{\headsep}{14pt}
\setlength{\footskip}{.3in}
\setlength{\textheight}{8.0in}
\setlength{\textwidth}{5.8in}
\setlength{\parskip}{4pt}
\linespread{1.2}

\theoremstyle{definition}

\makeatletter
\newtheorem*{rep@theorem}{\rep@title}
\newcommand{\newreptheorem}[2]{%
\newenvironment{rep#1}[1]{%
 \def\rep@title{#2~\ref{##1}}%
 \begin{rep@theorem}}%
 {\end{rep@theorem}}}
 
\makeatother

\newtheorem{theorem}{Theorem}

\newtheorem{lemma}[theorem]{Lemma}
\newtheorem{proposition}[theorem]{Proposition}

\newtheorem{question}[theorem]{Question}

\newreptheorem{theorem}{Theorem}
\newreptheorem{proposition}{Proposition}

\theoremstyle{remark}
\newtheorem{remark}[theorem]{Remark}

\begin{document}

\title[Lefschetz classes on projective varieties]{Lefschetz classes on projective varieties}

\author{June Huh and Botong Wang}

\address{Institute for Advanced Study, Fuld Hall, 1 Einstein Drive, Princeton, NJ, USA.}
\email{huh@princeton.edu}


\address{University of Wisconsin-Madison, Van Vleck Hall, 480 Lincoln Drive, Madison, WI, USA.}
\email{bwang274@wisc.edu}

\begin{abstract}
The Lefschetz algebra $L^*(X)$ of a smooth complex projective variety $X$ is  the subalgebra of the cohomology algebra of $X$ generated by divisor classes. We construct smooth complex projective varieties  whose Lefschetz algebras do not satisfy analogues of the hard Lefschetz theorem and Poincar\'e duality.
 \end{abstract}

\maketitle

\section{Introduction}

Let $X$ be a $d$-dimensional smooth complex projective variety, and let $\text{Alg}^*(X)$ be the commutative graded $\mathbb{Q}$-algebra of algebraic cycles on $X$ modulo homological equivalence
\[
\text{Alg}^*(X)=\bigoplus_{k=0}^d \text{Alg}^k(X) \subseteq H^{2*}(X,\mathbb{Q}).
\]
A hyperplane section of $X \subseteq \mathbb{P}^n$ defines a cohomology class $\omega \in \text{Alg}^1(X)$.
Grothendieck's standard conjectures  predict that $\text{Alg}^*(X)$ satisfies analogues of  the hard Lefschetz theorem and Poincar\'e duality:
\begin{enumerate}[(1)]\itemsep 3pt
\item[(HL)] For every nonnegative integer $k \le \frac{d}{2}$, the linear map
\[
\text{Alg}^k(X)\longrightarrow \text{Alg}^{d-k}(X), \qquad x \longmapsto \omega^{d-2k} \ x
\]
is an isomorphism.
\item[(PD)] For every nonnegative integer $k \le \frac{d}{2}$, the bilinear map
\[
\text{Alg}^k(X)\times \text{Alg}^{d-k}(X)\longrightarrow \text{Alg}^{d}(X)\simeq \mathbb{Q}, \qquad (x_1,x_2) \longmapsto x_1x_2
\]
is nondegenerate.
\end{enumerate}
The properties (HL) and (PD) for $\text{Alg}^*(X)$ are implied by  the Hodge conjecture for $X$.

The \emph{Lefschetz algebra} of $X$ is  the graded  $\mathbb{Q}$-subalgebra $L^*(X)$ of $\text{Alg}^*(X)$ generated by divisor classes. When $X$ is singular, we may define the Lefschetz algebra  to be the graded $\mathbb{Q}$-algebra in the intersection cohomology
generated by the Chern classes of line bundles 
\[
L^*(X) \subseteq IH^{2*}(X,\mathbb{Q}).
\]
 An application of Lefschetz algebras to the ``top-heavy'' conjecture in enumerative combinatorics was  given in \cite{Huh-Wang}.

It was asked whether there are smooth projective varieties whose Lefschetz algebras do not 
satisfy analogues of the hard Lefschetz theorem and Poincar\'e duality \cite{Kaveh}:
\begin{enumerate}[(1)]\itemsep 3pt
\item[(HL)] For every nonnegative integer $k \le \frac{d}{2}$, the linear map
\[
L^k(X)\longrightarrow L^{d-k}(X), \qquad x \longmapsto \omega^{d-2k} \ x
\]
is an isomorphism.
\item[(PD)] For every nonnegative integer $k \le \frac{d}{2}$, the bilinear map
\[
L^k(X)\times L^{d-k}(X)\longrightarrow L^{d}(X)\simeq \mathbb{Q}, \qquad (x_1,x_2) \longmapsto x_1x_2
\]
is nondegenerate.
\end{enumerate}
We show in Proposition \ref{PropositionEquivalence} that (HL) and (PD) for $L^*(X)$ are equivalent to each other, and to the numerical condition that
\[
\text{dim}\ L^k(X)=\text{dim}\ L^{d-k}(X) \ \ \text{for all $k$}.
\]

Many familiar smooth projective varieties satisfy (HL) and (PD) for $L^*(X)$. In Section \ref{symmetric}, we show that this is the case for 
\begin{enumerate}[(1)]\itemsep 3pt
\item toric varieties, 
\item abelian varieties,
\item Grassmannians and full flag varieties,
\item wonderful compactifications of arrangement complements,
\item products of two or more varieties  listed above,
\item complete intersections of ample divisors in the varieties listed above, 
\item all smooth projective varieties with Picard number $1$, and
\item all smooth projective varieties of dimension at most $4$.
\end{enumerate}
In Section \ref{nonsymmetric}, we construct three varieties whose Lefschetz algebras do not satisfy (HL) and (PD).

\begin{theorem}
There is a $d$-dimensional smooth complex projective variety $X$ with the property
\[
\text{dim}\ L^2(X) \neq \text{dim}\ L^{d-2}(X).
\]
\end{theorem}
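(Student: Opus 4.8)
The plan is to realize $\dim L^k(X)$ as the rank of the multiplication map $\mu_k \colon \mathrm{Sym}^k L^1(X) \to H^{2k}(X,\mathbb{Q})$, where $L^1(X) = \mathrm{NS}(X)_{\mathbb{Q}}$ is the space of divisor classes, and then to engineer a variety for which $\mathrm{rank}\,\mu_2$ is strictly smaller than $\mathrm{rank}\,\mu_{d-2}$. By Proposition \ref{PropositionEquivalence} any such asymmetry suffices, so I would aim for $\dim L^2(X) < \dim L^{d-2}(X)$, consistent with the one-sided ``top-heavy'' bound. The asymmetry I would exploit is structural: a class of degree $2$ is a product of only two divisors, whereas a class of degree $d-2$ is a product of many divisors, so the two sides of the putative duality are assembled from combinatorially very different data.

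Concretely, I would take $X = \mathbb{P}(\mathcal{E})$, the projectivization of a rank-$r$ vector bundle $\mathcal{E}$ on a smooth projective base $B$ of dimension $b$, with $d = b+r-1$ and $r \ge 3$. Writing $\xi$ for the relative hyperplane class and $\pi$ for the projection, one has $L^1(X) = \pi^* L^1(B) \oplus \mathbb{Q}\,\xi$, and $H^*(X)$ is the free $H^*(B)$-module on $1,\xi,\dots,\xi^{r-1}$ modulo the Grothendieck relation $\xi^r = -\sum_{i=1}^{r}\pi^* c_i(\mathcal{E})\,\xi^{r-i}$. Since $2 < r$, no degree-$2$ monomial in the generators invokes the relation, so $L^2(X) = \mathbb{Q}\,\xi^2 \oplus \xi\,\pi^* L^1(B) \oplus \pi^* L^2(B)$ and $\dim L^2(X) = 1 + \dim L^1(B) + \dim L^2(B)$, an expression involving only the Lefschetz data of $B$. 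By contrast, a monomial of degree $d-2 = b+r-3$ forces high powers of $\xi$, and reducing them through the Grothendieck relation expresses the resulting Lefschetz classes of $X$ in terms of the Segre classes $s_j(\mathcal{E}) \in H^{2j}(B)$; the top module component of $L^{d-2}(X)$ then contains the span of $s_j(\mathcal{E})\cdot L^{b-2-j}(B)$ for $0 \le j \le b-2$, rather than just $L^{b-2}(B)$.

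The mechanism is therefore to choose $(B,\mathcal{E})$ so that some Segre class $s_j(\mathcal{E})$ is an algebraic class of $B$ that does \emph{not} lie in the divisor-generated subalgebra $L^*(B)$. Such extra classes enlarge $L^{d-2}(X)$ while leaving $L^2(X)$ untouched, forcing $\dim L^{d-2}(X) > \dim L^2(X)$. For the base I would take an abelian variety (or a product of curves) of dimension at least $4$ with small Picard number but with algebraic Hodge classes in $H^4(B)$ that are not polynomials in divisors --- for instance an abelian variety with extra endomorphisms, whose ring of Hodge classes is controlled by its Mumford--Tate group --- and I would build $\mathcal{E}$ so that $c_2(\mathcal{E})$, hence $s_2(\mathcal{E}) = c_1(\mathcal{E})^2 - c_2(\mathcal{E})$, realizes such a non-Lefschetz class. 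A cleaner variant that sidesteps the existence question for $\mathcal{E}$ is to replace the projective bundle by a blow-up $X = \mathrm{Bl}_Z B$ along a carefully chosen smooth center $Z$: there the exceptional divisor plays the role of $\xi$, its self-intersections are governed by the Segre classes of the normal bundle $N_{Z/B}$, and these are automatically algebraic.

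The main obstacle I anticipate is twofold. First, one must certify that the relevant Segre (equivalently Chern) class genuinely escapes $L^*(B)$ --- a Hodge-theoretic computation on $B$, carried out most cleanly by describing the algebra of Hodge classes through the Mumford--Tate group and checking that the class in question is not a polynomial in divisor classes. Second, one must compute $\dim L^2(X)$ and $\dim L^{d-2}(X)$ exactly, tracking every module component of $L^{d-2}(X)$ and ruling out coincidental linear dependencies among the products $s_j(\mathcal{E})\cdot L^{b-2-j}(B)$, so as to conclude that the inequality is strict rather than accidental. Everything else --- the ring structure of $H^*(X)$, the identification of $L^1(X)$, and the degree-$2$ count --- is routine once the base and the bundle are fixed.
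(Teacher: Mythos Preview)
Your mechanism --- take a projective bundle or blow-up so that high powers of the tautological (or exceptional) class, reduced via the Grothendieck relation, drag Chern/Segre classes into $L^{d-2}(X)$ that escape the divisor-generated subalgebra of the base --- is exactly the one the paper uses. The difference lies only in the choice of base. The paper's third example is literally $X=\mathbb{P}(\mathcal{Q})$ over the Grassmannian $Y=G(2,5)$: here $L^*(Y)$ is generated by the single Schubert class $\sigma_1$, while $c_2(\mathcal{Q})=\sigma_2$ is an algebraic class not lying in $L^2(Y)=\mathbb{Q}\,\sigma_1^2$, and the bundle comes packaged with the base. This sidesteps both obstacles you flagged --- no Mumford--Tate analysis, no need to certify an exotic Hodge class as algebraic, no construction of a bundle with prescribed $c_2$ --- and Pieri's rule carries out the full dimension count in a few lines, giving $\dim L^2(X)=3<4=\dim L^6(X)$. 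The paper's two blow-up examples are simpler still: there the ambient space $Y$ is $\mathbb{P}^5$ or $\mathbb{P}^3\times\mathbb{P}^3$ (so $L^*(Y)=H^{2*}(Y,\mathbb{Q})$ already), and the asymmetry comes instead from choosing a center $Z$ for which $\iota^*\colon H^2(Y)\to H^2(Z)$ is not surjective; the missing direction in $H^2(Z)$ then enters $L^{d-2}(X)$ through the normal-bundle Chern classes, with no Hodge theory whatsoever. Your abelian-variety route is sound in principle, but it trades an elementary Schubert computation for genuinely hard input --- the existence of non-Lefschetz \emph{algebraic} cycles on the base, followed by their realization as $c_2$ of an honest bundle --- that the paper's choices avoid entirely.
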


The first example has dimension $5$ and Picard number $2$,
 and the second example has dimension $6$ and Picard number $3$.
The third example is a partial flag variety;  it shows that the assumption made in \cite[Theorem 4.1]{Kaveh} is not redundant.

\section{Lefschetz algebras with Poincar\'e duality}\label{symmetric}

Let $X \subseteq \mathbb{P}^n$ be a $d$-dimensional smooth complex projective variety, and let $\omega\in L^1(X)$ be the cohomology class of a hyperplane section. 

\begin{proposition}\label{PropositionEquivalence}
The following statements are equivalent.
\begin{enumerate}[(1)]\itemsep 3pt
 \item The Lefschetz algebra $L^*(X)$ satisfies the hard Lefschetz theorem, that is, the linear map
 \[
L^k(X)\longrightarrow L^{d-k}(X), \qquad x \longmapsto \omega^{d-2k} \ x
\]
is an isomorphism for every nonnegative integer $k \le \frac{d}{2}$.
\item The Lefschetz algebra $L^*(X)$ satisfies Poincar\'e duality, that is, the bilinear map
\[
L^k(X)\times L^{d-k}(X)\longrightarrow L^{d}(X)\simeq \mathbb{Q}, \qquad (x_1,x_2) \longmapsto x_1x_2
\]
is nondegenerate for every nonnegative integer $k \le \frac{d}{2}$.
\item The Lefschetz algebra $L^*(X)$ has symmetric dimensions, that is, the equality
\[
\text{dim}\ L^k(X)=\text{dim}\ L^{d-k}(X) 
\]
holds for every nonnegative integer $k \le \frac{d}{2}$.
\end{enumerate}
\end{proposition}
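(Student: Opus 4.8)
The plan is to exploit the fact that the ambient cohomology $H^{2*}(X,\mathbb{Q})$ already satisfies the hard Lefschetz theorem, Poincar\'e duality, and the Hodge--Riemann bilinear relations, and to transport only the structure that survives restriction to the subalgebra $L^*(X)$. Throughout I write $\int_X$ for the degree isomorphism $H^{2d}(X,\mathbb{Q})\xrightarrow{\sim}\mathbb{Q}$, and I record at the outset that, because $\omega\in L^1(X)$, multiplication by $\omega$ carries $L^k(X)$ into $L^{k+1}(X)$, the product carries $L^k(X)\times L^{d-k}(X)$ into $L^d(X)$, and $L^d(X)=\mathbb{Q}$ since $\omega^d\neq 0$.

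First I would isolate the one inequality that comes for free. The map $x\mapsto \omega^{d-2k}x$ from $L^k(X)$ to $L^{d-k}(X)$ is the restriction of the ambient hard Lefschetz isomorphism $H^{2k}(X,\mathbb{Q})\to H^{2(d-k)}(X,\mathbb{Q})$, hence is injective for every $k\le d/2$; in particular $\dim L^k(X)\le\dim L^{d-k}(X)$. This yields $(1)\Leftrightarrow(3)$ at once: an injective linear map of finite-dimensional spaces is an isomorphism exactly when the dimensions agree, so the Lefschetz map is an isomorphism for all $k\le d/2$ iff $\dim L^k(X)=\dim L^{d-k}(X)$ for all such $k$. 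The implication $(2)\Rightarrow(3)$ is equally formal, since a nondegenerate pairing between two finite-dimensional spaces forces them to have equal dimension. To close the cycle it then suffices to establish $(1)\Rightarrow(2)$.

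For this main step, assume hard Lefschetz for $L^*(X)$. Then $(L^*(X),\omega)$, with $\omega$ acting by multiplication, is a finite-dimensional graded vector space whose raising operator satisfies hard Lefschetz, so by the $\mathfrak{sl}_2$ (Lefschetz) decomposition it splits as $L^k(X)=\bigoplus_{i\ge 0}\omega^i\,P^{k-i}_L$, where $P^{j}_L=\ker\bigl(\omega^{d-2j+1}\colon L^j(X)\to L^{d-j+1}(X)\bigr)$. Since this kernel is computed inside $H^{2*}(X,\mathbb{Q})$, one checks $P^{j}_L=L^j(X)\cap P^{2j}(X)$, the intersection of $L^j(X)$ with the ambient primitive cohomology. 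Now I invoke the Hodge--Riemann relations on $X$: for each $j$ the symmetric form $(x,y)\mapsto\int_X\omega^{d-2j}xy$ is definite on the real primitive Hodge classes of degree $2j$, with a sign depending only on $j$, and the ambient Lefschetz decomposition is orthogonal for this form. Restricting to $L^k(X)$, the form $Q_k(x,y)=\int_X\omega^{d-2k}xy$ becomes block diagonal along $\bigoplus_i\omega^i P^{k-i}_L$, and on the $i$-th block it is identified with the restriction of the above \emph{definite} form to the rational subspace $P^{k-i}_L\subseteq P^{2(k-i)}(X)$, hence remains nondegenerate. Thus $Q_k$ is nondegenerate on $L^k(X)$; combined with the surjectivity of $\omega^{d-2k}\colon L^k(X)\to L^{d-k}(X)$ supplied by $(1)$, this shows the pairing $L^k(X)\times L^{d-k}(X)\to\mathbb{Q}$ is nondegenerate.

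The main obstacle is precisely this last implication. The naive attempt---to restrict the ambient Poincar\'e pairing to $L^k(X)\times L^{d-k}(X)$---fails because nondegeneracy does not descend to subspaces; and one cannot merely restrict the ambient $\mathfrak{sl}_2$-action, since the dual Lefschetz operator $\Lambda$ need not preserve $L^*(X)$. The resolution is to use the \emph{intrinsic} Lefschetz decomposition furnished by hypothesis $(1)$ together with the \emph{definiteness} (not mere nondegeneracy) of the Hodge--Riemann form, which is the property that survives passage to the rational subspaces $P^{j}_L$. The two technical points I expect to verify carefully are the compatibility $P^{j}_L=L^j(X)\cap P^{2j}(X)$ and the orthogonality of the decomposition for $Q_k$.
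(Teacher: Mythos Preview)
Your argument is correct and follows essentially the same route as the paper: both use the ambient hard Lefschetz theorem to get injectivity (hence $(1)\Leftrightarrow(3)$), note that $(2)\Rightarrow(3)$ is formal, and for the main step use the intrinsic Lefschetz decomposition of $L^k(X)$ together with the definiteness of the Hodge--Riemann form on primitive $(j,j)$-classes. The only cosmetic difference is that the paper exhibits nondegeneracy by pairing a nonzero $x$ against a single primitive component $\omega^{d-j-k}x_j$, whereas you phrase it via the block-diagonal structure of $Q_k$; these are the same computation.
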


\begin{proof}
Clearly, (1) implies (3), and (2) implies (3). 
The hard Lefschetz theorem for $H^*(X,\mathbb{Q})$ shows that (3) implies (1).
We prove that (3) implies (2).

Suppose (3), or equivalently, (1). This implies that, for every nonnegative integer $k\le \frac{d}{2}$,
\[
L^k(X)= \bigoplus_{i=0}^k \ \omega^{k-i} PL^{i}(X),
\ \ \text{where} \ \
PL^{i}(X)=\text{ker}\Big(\omega^{d-2i+1}:
L^i(X) \longrightarrow L^{d-i+1}(X)
\Big).
\]
In other words, every primitive component appearing in the Lefschetz decomposition of an element $x \in L^k(X)$  is an element of $L^*(X)$.
Let us write
\[
x=\sum_{i=0}^k \omega^{k-i} x_i, \quad x_i \in L^i(X).
\]
If $x$ is nonzero, then some summand $\omega^{k-j}x_j$ is nonzero.
The Hodge-Riemann relation for the primitive subspace of $H^{j,j}(X)$ shows that
\[
(-1)^j\int_X  \omega^{d-j-k}  x_jx=(-1)^j\int_X  \omega^{d-2j}  x_j^2>0.
\]
Thus the product of $x$ with $\omega^{d-j-k} x_j$ is nonzero, and hence $L^*(X)$ satisfies Poincar\'e duality.
\end{proof}

Many familiar smooth projective varieties  have Lefschetz algebras satisfying Poincar\'e duality.
The most obvious examples are the varieties with
$L^*(X)=H^{2*}(X,\mathbb{Q})$,
such as smooth projective toric varieties,  complete flag varieties, wonderful compactifications of hyperplane arrangement complements, etc.
We collect more examples in the remainder of this section.

\begin{lemma}\label{LemmaSmallk}
For every nonnegative integer $k \le \frac{d}{2}$, the linear map
 \[
L^k(X)\longrightarrow L^{d-k}(X), \qquad x \longmapsto \omega^{d-2k} \ x
\]
is injective. For $k=0$ and $k=1$, the map is bijective.
\end{lemma}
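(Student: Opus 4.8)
The plan is to handle the injectivity statement for general $k$ and the bijectivity statements for $k=0$ and $k=1$ by three separate arguments, all resting on the hard Lefschetz theorem for the full cohomology $H^*(X,\mathbb{Q})$ together with the Hodge decomposition.

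First, for injectivity when $k \le \frac{d}{2}$, I would observe that $L^k(X) \subseteq H^{2k}(X,\mathbb{Q})$ and that the displayed map is the restriction of $\omega^{d-2k} : H^{2k}(X,\mathbb{Q}) \to H^{2(d-k)}(X,\mathbb{Q})$. Since $\omega$ is the class of a hyperplane section, the hard Lefschetz theorem makes this an isomorphism on $H^*(X,\mathbb{Q})$, so its restriction to the subspace $L^k(X)$ is injective. The image lands in $L^{d-k}(X)$ because $L^*(X)$ is a graded algebra and $\omega \in L^1(X)$, whence $\omega^{d-2k} x \in L^{(d-2k)+k}(X) = L^{d-k}(X)$ for every $x \in L^k(X)$.

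Next, for $k=0$, I would note that $L^0(X) = \mathbb{Q}\cdot 1$ and $L^d(X) \subseteq H^{2d}(X,\mathbb{Q}) \simeq \mathbb{Q}$. Since $\int_X \omega^d > 0$, the class $\omega^d$ is a nonzero element of $L^d(X)$, so $L^d(X) = \mathbb{Q}\cdot\omega^d$ and the map $1 \mapsto \omega^d$ is an isomorphism of one-dimensional spaces. Together with the injectivity above, this also settles $k=1$ once surjectivity is known.

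The main work—and where I expect the real obstacle—is surjectivity for $k=1$. Given $b \in L^{d-1}(X)$, the hard Lefschetz theorem over $\mathbb{Q}$ produces a unique $c \in H^2(X,\mathbb{Q})$ with $b = \omega^{d-2} c$, and the point is to show that $c$ is a divisor class, i.e. $c \in L^1(X)$. Here I would pass to $H^2(X,\mathbb{C})$ and use that multiplication $\omega^{d-2} : H^2(X,\mathbb{C}) \to H^{2d-2}(X,\mathbb{C})$ is an isomorphism preserving the Hodge decomposition, hence restricts to isomorphisms $H^{p,q} \to H^{p+d-2,q+d-2}$ on each summand. Writing $c = c^{2,0} + c^{1,1} + c^{0,2}$, the hypothesis that $b = \omega^{d-2} c$ is algebraic, hence of pure Hodge type $(d-1,d-1)$, forces $\omega^{d-2} c^{2,0} = \omega^{d-2} c^{0,2} = 0$, and the injectivity on each summand gives $c^{2,0} = c^{0,2} = 0$. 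Thus $c \in H^{1,1}(X) \cap H^2(X,\mathbb{Q})$, which by the Lefschetz $(1,1)$-theorem equals $\text{Alg}^1(X) = L^1(X)$. This exhibits $b$ as $\omega^{d-2}$ times a divisor class, proving surjectivity, and combined with injectivity the map is bijective for $k=1$.
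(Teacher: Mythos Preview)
Your proof is correct and follows essentially the same route as the paper: injectivity from hard Lefschetz on $H^*(X,\mathbb{Q})$, and for $k=1$ the combination of the Lefschetz $(1,1)$ theorem with the fact that $\omega^{d-2}$ respects Hodge type. The paper phrases the $k=1$ step as a dimension count---$L^1(X)=H^2(X,\mathbb{Q})\cap H^{1,1}(X)$ maps isomorphically under $\omega^{d-2}$ onto $H^{2d-2}(X,\mathbb{Q})\cap H^{d-1,d-1}(X)\supseteq L^{d-1}(X)$---whereas you chase an individual element $b$ back to a class $c$ and then strip its $(2,0)$ and $(0,2)$ parts; these are two ways of saying the same thing.
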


\begin{proof}
We prove the assertion for $k=1$.
By the Lefschetz $(1,1)$ theorem, we have
\[
L^1(X)= H^2(X,\mathbb{Q}) \cap H^{1,1}(X). 
\]
The hard Lefschetz theorem for $H^*(X,\mathbb{Q})$ implies that the left-hand side is isomorphic to
\[
H^{2d-2}(X,\mathbb{Q}) \cap H^{d-1,d-1}(X),  
\]
which contains $L^{d-1}(X)$ as a subspace. 
This forces $\text{dim}\ L^1(X)=\text{dim}\ L^{d-1}(X)$.
\end{proof}

\begin{proposition}\label{goodones}
Suppose any one of the following conditions:
\begin{enumerate}[(1)]\itemsep 3pt
\item\label{c1} $X$ is an abelian variety.
\item\label{c2} $X$ has Picard number $1$.
\item\label{c3} $X$ has dimension at most $4$.
\end{enumerate}
Then $L^*(X)$ satisfies Poincar\'e duality. 
\end{proposition}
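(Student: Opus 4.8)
The plan is to verify in each case condition (3) of Proposition~\ref{PropositionEquivalence}, the symmetry $\dim L^k(X)=\dim L^{d-k}(X)$, and then invoke that proposition to obtain Poincar\'e duality. Two of the cases are quick. When $\dim X = d\le 4$, every index $k\le d/2$ equals $0$, $1$, or $d/2$; Lemma~\ref{LemmaSmallk} already gives $\dim L^0=\dim L^d$ and $\dim L^1=\dim L^{d-1}$, while $k=d/2$ is tautological, so symmetry holds with nothing further to check. When $X$ has Picard number one, $L^1(X)$ is spanned by the ample class $\omega$, so $L^k(X)$ is spanned by $\omega^k$; since $\int_X\omega^d>0$ we have $\omega^k\ne 0$ for all $0\le k\le d$, whence $\dim L^k(X)=1$ for every $k$ and symmetry is immediate.

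The substantive case is the abelian variety. Let $A$ be an abelian variety of dimension $d$ and set $W=H^1(A,\mathbb{R})$, so that the translation-invariant forms are harmonic and the cohomology ring is identified with $\bigwedge^\bullet W^*$ under wedge product. A polarization endows $W^*$ with a rational symplectic form, and the associated Lefschetz triple $(L,H,\Lambda)$ acts on $\bigwedge^\bullet W^*$ through the purely algebraic operators of symplectic linear algebra: $L=\omega\wedge-$, $H$ is the grading, and $\Lambda$ is contraction against the dual bivector. I would prove that $L^*(A)$ is a sub-$\mathfrak{sl}_2$-module. Stability under $L$ (multiplication by $\omega\in L^1(A)$) and under $H$ (gradedness) is clear, so the whole content is stability under $\Lambda$.

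Since $\Lambda$ is a second-order contraction, on a product of divisor classes $\alpha_1,\dots,\alpha_k\in L^1(A)$ it obeys the expansion
\[
\Lambda(\alpha_1\cdots\alpha_k)=\sum_i (\Lambda\alpha_i)\prod_{j\ne i}\alpha_j+\sum_{i<j}C(\alpha_i,\alpha_j)\prod_{l\ne i,j}\alpha_l,
\]
where $\Lambda\alpha_i\in\mathbb{Q}$ and $C(\alpha_i,\alpha_j)$ is the symplectic cross-contraction of the two factors. The crucial point is that each term $C(\alpha_i,\alpha_j)$ again lies in $L^1(A)$: it is rational because the contraction is built solely from the rational symplectic form, and it is of Hodge type $(1,1)$ because $\Lambda$ has bidegree $(-1,-1)$ and $\alpha_i\wedge\alpha_j\in H^{2,2}(A)$, so that $C(\alpha_i,\alpha_j)=\Lambda(\alpha_i\wedge\alpha_j)-(\Lambda\alpha_i)\alpha_j-(\Lambda\alpha_j)\alpha_i$ lands in $H^{1,1}(A)\cap H^2(A,\mathbb{Q})=L^1(A)$ by the Lefschetz $(1,1)$ theorem. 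Substituting this back shows $\Lambda(\alpha_1\cdots\alpha_k)\in L^{k-1}(A)$, so $\Lambda$ preserves $L^*(A)$.

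With $L^*(A)$ established as a finite-dimensional graded $\mathfrak{sl}_2$-module, the representation theory of $\mathfrak{sl}_2$ yields both the symmetry $\dim L^k(A)=\dim L^{d-k}(A)$ and the hard Lefschetz isomorphisms, and Proposition~\ref{PropositionEquivalence} then delivers Poincar\'e duality. The main obstacle is precisely the $\Lambda$-stability step: one must check that the cross-contraction term remains rational and of pure type $(1,1)$, which is exactly what prevents it from leaving the subalgebra of divisor classes and escaping into the rest of $H^2(A,\mathbb{Q})$.
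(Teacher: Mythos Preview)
Your argument is correct. For cases (\ref{c2}) and (\ref{c3}) you proceed exactly as the paper does, reducing to the dimension symmetry via Lemma~\ref{LemmaSmallk} and then invoking Proposition~\ref{PropositionEquivalence}. For case (\ref{c1}), however, the paper simply cites \cite[Proposition~5.2]{Milne}, while you give a self-contained proof: you show that $L^*(A)$ is an $\mathfrak{sl}_2$-submodule of $\bigwedge^\bullet H^1(A,\mathbb{Q})$ by verifying stability under $\Lambda$. The key observation---that the cross term $C(\alpha_i,\alpha_j)=\Lambda(\alpha_i\alpha_j)-(\Lambda\alpha_i)\alpha_j-(\Lambda\alpha_j)\alpha_i$ is rational (because the dual bivector to a nondegenerate rational form is rational) and of type $(1,1)$ (because $\Lambda$ has bidegree $(-1,-1)$), hence lies in $L^1(A)$ by the Lefschetz $(1,1)$ theorem---together with the biderivation property of $C$ inherited from the second-order structure of $\Lambda$, cleanly closes the induction. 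This route is more elementary and transparent than appealing to Milne's Lefschetz-group machinery, and it isolates precisely the two features of abelian varieties that are used: the exterior-algebra description of cohomology and the rationality of the polarization. What you lose relative to the citation is only the broader context of \cite{Milne}, which establishes stronger closure properties of Lefschetz classes under correspondences that your direct argument does not touch.
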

\begin{proof}
(\ref{c1}) is proved by Milne \cite[Proposition 5.2]{Milne}. (\ref{c2}) and
 (\ref{c3})  follow from Proposition \ref{PropositionEquivalence} and Lemma \ref{LemmaSmallk}.
\end{proof}

We may construct Lefschetz algebras with Poincar\'e duality by taking hyperplane sections. 
Let $\iota$ be the inclusion of a smooth ample hypersurface $D \subseteq X$ with cohomology class $\omega \in L^1(X)$.

\begin{proposition}\label{hyperplane}
 If $L^*(X)$ satisfies Poincar\'e duality, then $L^*(D)$ satisfies Poincar\'e duality. 
\end{proposition}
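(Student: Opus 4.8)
The plan is to reduce the statement to the numerical criterion of Proposition~\ref{PropositionEquivalence}: writing $e=\dim D=d-1$, it suffices to prove $\dim L^k(D)=\dim L^{e-k}(D)$ for every $k\le e/2$. The main tools will be the restriction homomorphism $\iota^*\colon L^*(X)\to L^*(D)$ and the Gysin map $\iota_*$, compared through the weak and hard Lefschetz theorems on the ambient $X$ and the hypothesis on $X$. First I would record that, when $\dim D\ge 3$, the weak Lefschetz theorem makes $\iota^*\colon H^2(X,\mathbb{Q})\to H^2(D,\mathbb{Q})$ an isomorphism of Hodge structures, so $L^1(D)=\iota^*L^1(X)$; since $L^*(D)$ is generated by $L^1(D)$ and $\iota^*$ is a ring map, $\iota^*$ is surjective in every degree. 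The remaining case $\dim D\le 2$ is subsumed by Proposition~\ref{goodones}(3). I will also use the self-intersection and projection formulas in the form $\iota_*\iota^*=\cup\,\omega$ on $L^*(X)$ and $\iota^*\iota_*=\cup\,h$ on $L^*(D)$, where $h=\iota^*\omega$.

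The argument then splits into two ranges. For $2k\le e$ the composition $\iota_*\iota^*=\cup\,\omega$ is injective on $H^{2k}(X,\mathbb{Q})$ by hard Lefschetz on $X$, so $\iota^*$ is injective on $H^{2k}$ and hence on $L^k(X)$; being also surjective, $\iota^*\colon L^k(X)\to L^k(D)$ is an isomorphism and $\dim L^k(D)=\dim L^k(X)$. For the complementary degrees $2j\ge d$ I would instead compute the kernel of $\iota^*\colon L^j(X)\to L^j(D)$. The key step is the identity $\ker\big(\iota^*|_{H^{2j}}\big)=\ker\big(\cup\,\omega|_{H^{2j}}\big)$, valid for $2j>e$: the inclusion $\subseteq$ is immediate from $\iota_*\iota^*=\cup\,\omega$, and the reverse inclusion holds because $\iota_*\colon H^{2j}(D)\to H^{2j+2}(X)$ is injective in this range, this injectivity being Poincar\'e-dual to the surjectivity part of weak Lefschetz for $\iota^*$ in degree $2e-2j<e$. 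Intersecting with $L^j(X)$ gives $\ker(\iota^*|_{L^j(X)})=\ker(\cup\,\omega\colon L^j(X)\to L^{j+1}(X))$, so by surjectivity $\dim L^j(D)=\dim L^j(X)-\dim\ker(\cup\,\omega\colon L^j(X)\to L^{j+1}(X))$.

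To finish I would feed in the hypothesis on $X$. Since $L^*(X)$ satisfies Poincar\'e duality, it satisfies hard Lefschetz and hence admits the Lefschetz decomposition $L^j(X)=\bigoplus_i \omega^{j-i}PL^i(X)$, exactly as in the proof of Proposition~\ref{PropositionEquivalence}. A direct inspection shows that, when $2j\ge d$, multiplication by $\omega$ annihilates precisely the top summand $\omega^{2j-d}PL^{d-j}(X)$, so $\dim\ker(\cup\,\omega|_{L^j(X)})=\dim PL^{d-j}(X)$. Writing $p_i=\dim PL^i(X)$ and using $\dim L^j(X)=\sum_{i=0}^{d-j}p_i$ for $2j\ge d$, this yields $\dim L^j(D)=\sum_{i=0}^{d-j-1}p_i$. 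Setting $j=e-k$ (so that $d-j-1=k$) gives $\dim L^{e-k}(D)=\sum_{i=0}^{k}p_i$ whenever $2k<e$, while the first range gives $\dim L^k(D)=\dim L^k(X)=\sum_{i=0}^{k}p_i$ for $2k\le e$; the middle degree $2k=e$ is trivially self-paired. The two expressions agree, so the dimensions of $L^*(D)$ are symmetric and Proposition~\ref{PropositionEquivalence} completes the proof.

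I expect the main obstacle to be the identification $\ker(\iota^*|_{H^{2j}})=\ker(\cup\,\omega|_{H^{2j}})$ in the upper range: controlling the kernel of restriction requires injectivity of the Gysin map, which is only available for $2j>e$ and must be extracted by dualizing weak Lefschetz. The remaining bookkeeping, namely tracking which primitive pieces survive multiplication by $\omega$ and matching the two summation ranges, is routine but is where an off-by-one slip would be easiest to make, so I would sanity-check it against $X=\mathbb{P}^n$, where every $L^k(D)$ should come out one-dimensional.
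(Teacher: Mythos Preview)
Your argument is correct. Both your proof and the paper's ultimately establish that $\iota^*$ induces an isomorphism $L^*(X)/\ann(\omega)\xrightarrow{\ \sim\ }L^*(D)$ (for $d\ge 4$), with surjectivity coming from the Lefschetz hyperplane theorem for Picard groups, and then read off the symmetry of dimensions. The genuine difference is in the high-degree half. The paper first notes that $L^*(X)/\ann(\omega)$ inherits Poincar\'e duality from $L^*(X)$, and then proves injectivity of $\overline{\iota}^*$ in degrees $k\ge d/2$ by a dimension count that feeds Lemma~\ref{LemmaSmallk} applied to $D$ back into the already-established low-degree isomorphism. You instead prove the equality $\ker\big(\iota^*|_{H^{2j}}\big)=\ker\big(\cup\,\omega|_{H^{2j}}\big)$ directly from the projection formula and the injectivity of the Gysin map (dual to weak Lefschetz), and then unfold the primitive decomposition to count dimensions. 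Your route is a little more computational but is self-contained: it does not appeal to Lemma~\ref{LemmaSmallk} for $D$, and it makes the identification $\ker\iota^*=\ann(\omega)$ explicit in every degree. The paper's packaging via the quotient algebra is shorter and yields the same isomorphism without ever writing down the $p_i$'s.
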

\begin{proof}
The Lefschetz hyperplane theorem for $D \subseteq X$ and Poincar\'e duality for $D$ show that the pullback  $\iota^*$ in cohomology induces a commutative diagram
 \[
  \raisebox{-0.5\height}{\includegraphics{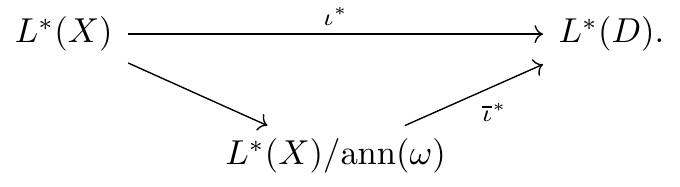}}
 \]
If $L^*(X)$ satisfies Poincar\'e duality, then $L^*(X)/\text{ann}(\omega)$ satisfies Poincar\'e duality:
\[
L^k(X)/\text{ann}(\omega) \times L^{d-k-1}(X)/\text{ann}(\omega) \longrightarrow L^{d-1}(X)/\text{ann}(\omega) \simeq L^d(X) \simeq \mathbb{Q} \ \ \text{is nondegenerate.}
\]
When $d <4$,  the conclusion follows from
Proposition \ref{goodones} applied to $D$.
Assuming $d \ge 4$, we show that the induced map $\overline{\iota}^*$ is an isomorphism. 

When $d \ge 4$,
the Lefschetz hyperplane theorem for Picard groups applies to the inclusion $D \subseteq X$,
and therefore $\overline{\iota}^*$ is surjective \cite[Chapter 3]{Lazarsfeld}. 
To conclude, we deduce from the hard Lefschetz theorem for $H^*(X,\mathbb{Q})$  that
\[
L^k(X) \simeq L^k(X)/\text{ann}(\omega) \simeq L^k(D) \ \ \text{for every nonnegative integer $k < d/2$}.
\] 
Lemma \ref{LemmaSmallk} applied to $D$ shows that $\overline{\iota}^*$ is an injective in the remaining degrees $k \ge \frac{d}{2}$.
\end{proof}

We may construct Lefschetz algebras with Poincar\'e duality by taking products. 
Let $X_1$ and $X_2$ be smooth  projective varieties, and suppose that $H^1(X_1,\mathbb{Q})=0$.

\begin{proposition}\label{product}
There is an isomorphism between graded algebras
\[
L^*(X_1 \times X_2) \simeq L^*(X_1) \otimes_\mathbb{Q} L^*(X_2).
\]
Thus $L^*(X_1 \times X_2)$ satisfies Poincar\'e duality if $L^*(X_1)$ and $L^*(X_2)$ satisfy Poincar\'e duality.
 \end{proposition}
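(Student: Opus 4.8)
The plan is to realize the claimed isomorphism as the restriction of the Künneth isomorphism to the sub-tensor-product $L^*(X_1) \otimes_{\mathbb{Q}} L^*(X_2)$. Writing $p_1, p_2$ for the two projections from $X_1 \times X_2$, I would consider the map
\[
\phi \colon L^*(X_1) \otimes_{\mathbb{Q}} L^*(X_2) \longrightarrow H^{2*}(X_1 \times X_2, \mathbb{Q}), \qquad a \otimes b \longmapsto p_1^* a \cup p_2^* b.
\]
This is a graded algebra homomorphism because $p_1^*$ and $p_2^*$ are ring maps and their images, lying in even degree, commute. Since $L^*(X_i) \subseteq H^*(X_i,\mathbb{Q})$ and tensoring inclusions of $\mathbb{Q}$-vector spaces preserves injectivity, $\phi$ is the restriction of the Künneth isomorphism $H^*(X_1,\mathbb{Q}) \otimes_{\mathbb{Q}} H^*(X_2,\mathbb{Q}) \xrightarrow{\sim} H^*(X_1 \times X_2,\mathbb{Q})$ to a subspace, hence is automatically injective. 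It therefore suffices to identify the image of $\phi$ with $L^*(X_1 \times X_2)$.

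For this I would first determine $L^1(X_1 \times X_2)$. By the Lefschetz $(1,1)$ theorem (as used in Lemma \ref{LemmaSmallk}), $L^1(Y) = H^2(Y,\mathbb{Q}) \cap H^{1,1}(Y)$ for any smooth projective $Y$. The Künneth decomposition
\[
H^2(X_1 \times X_2, \mathbb{Q}) \cong \big(H^2(X_1) \otimes H^0(X_2)\big) \oplus \big(H^1(X_1) \otimes H^1(X_2)\big) \oplus \big(H^0(X_1) \otimes H^2(X_2)\big)
\]
is an isomorphism of Hodge structures, and the hypothesis $H^1(X_1,\mathbb{Q}) = 0$ kills the middle summand. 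Intersecting with the $(1,1)$-part and using $H^0(X_i) = \mathbb{Q}$, this yields
\[
L^1(X_1 \times X_2) = p_1^* L^1(X_1) \oplus p_2^* L^1(X_2).
\]
This is the only place the hypothesis on $H^1(X_1)$ enters, and it is essential: without it a product could acquire divisor classes outside the span of the pullbacks (for instance graphs of isogenies between elliptic-curve factors). I expect this divisor-class computation to be the main obstacle; everything else is formal once Künneth is invoked.

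The image of $\phi$ is a subalgebra containing $p_1^* L^1(X_1)$ (take $b=1$) and $p_2^* L^1(X_2)$ (take $a=1$), hence contains all of $L^1(X_1 \times X_2)$ by the previous step. Since $L^*(X_1 \times X_2)$ is generated by its degree-one part, the image of $\phi$ is exactly $L^*(X_1 \times X_2)$, so $\phi$ is the desired graded algebra isomorphism. For the final assertion I would pass to dimensions: the isomorphism gives
\[
\dim L^k(X_1 \times X_2) = \sum_{i+j=k} \dim L^i(X_1)\, \dim L^j(X_2).
\]
If both factors have symmetric dimensions, then with $d_i = \dim X_i$ the substitution $i \mapsto d_1 - i$, $j \mapsto d_2 - j$ matches the degree-$k$ sum with the degree-$(d_1+d_2-k)$ sum, so $L^*(X_1 \times X_2)$ also has symmetric dimensions. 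Poincaré duality then follows from Proposition \ref{PropositionEquivalence}.
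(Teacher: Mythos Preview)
Your argument is correct and follows essentially the same route as the paper: both use the K\"unneth decomposition of $H^2$ as an isomorphism of Hodge structures, invoke $H^1(X_1,\mathbb{Q})=0$ to remove the mixed term, and then apply the Lefschetz $(1,1)$ description of $L^1$ to obtain $L^1(X_1\times X_2)=p_1^*L^1(X_1)\oplus p_2^*L^1(X_2)$, from which the algebra isomorphism follows formally. Your write-up is simply more explicit about the map $\phi$ and about deducing Poincar\'e duality via the symmetric-dimension criterion of Proposition~\ref{PropositionEquivalence}; the only place you could be slightly more careful is in saying the image of $\phi$ is \emph{exactly} $L^*(X_1\times X_2)$, since you argued only the containment $\supseteq$---but the reverse inclusion is immediate from the fact that each $p_i^*L^1(X_i)$ already lies in $L^1(X_1\times X_2)$.
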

 
\begin{proof}
By the K\"unneth formula, there is an isomorphism of Hodge structures
$$
H^2(X_1\times X_2, \mathbb{Q})\simeq H^2(X_1, \mathbb{Q}) \oplus H^2(X_2, \mathbb{Q}).
$$
The above restricts to an isomorphism between subspaces 
\[
L^1(X_1\times X_2)\simeq L^1(X_1)\oplus L^1(X_2),
\]
which induces an isomorphism of graded $\mathbb{Q}$-algebras 
$L^*(X_1\times X_2)\simeq L^*(X)\otimes_\mathbb{Q} L^*(X_2)$.
\end{proof}

Proposition \ref{goodones}, Proposition \ref{hyperplane},  and Proposition \ref{product} justify that the eight classes of smooth projective varieties  listed in the introduction have  Lefschetz algebras with Poincar\'e duality.

\section{Lefschetz algebras without Poincar\'e duality}\label{nonsymmetric}
We construct three  smooth  projective varieties whose Lefschetz algebras do not satisfy Poincar\'e duality. Before giving the construction, we recall standard description of the cohomology ring of a blowup. 

Let $Z$ be a codimension $r$ smooth subvariety of a $d$-dimensional smooth projective variety $Y$. We write $\pi:X \to Y$ for  the blowup of $Y$ along $Z$,  and $\iota:Z \to Y$ for the inclusion of $Z$ in $Y$:
\[
  \raisebox{-0.5\height}{\includegraphics{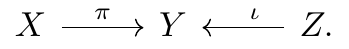}}
\]
The cohomology ring of the blowup $X$ can be described as follows \cite[Chapter 4]{GH}.

\begin{proposition}\label{blowup}
There is a decomposition of graded vector spaces
\begin{equation}\label{iso1}
H^{*}(X,\mathbb{Q})\simeq \pi^*H^{*}(Y,\mathbb{Q})\oplus \Bigg( \sum_{i=1}^{r-1}H^{*-2i}(Z,\mathbb{Q})  \otimes \mathbb{Q} \hspace{0.3mm}e^i\Bigg),
\end{equation}
where $e$ is the cohomology class of the exceptional divisor in $X$. 
The cup product satisfies
\[
\pi^*y \cup (z \otimes e^i)=(\iota^*y \cup z)\otimes e^i \ \ \text{for cohomology classes $y$ of $Y$ and $z$ of $Z$},
\]
and, writing $N_{Z/Y}$ for the normal bundle of the embedding $\iota:Z \to Y$, we have
\[
(-1)^re^r=\pi^*\iota_*(1)-\sum_{i=1}^{r-1} c_{r-i}(N_{Z/Y}) \otimes (-e)^{i}.
\]
\end{proposition}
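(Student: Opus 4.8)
The plan is to reduce everything to the geometry of the exceptional divisor, which is the projectivized normal bundle $E = \bP(N_{Z/Y})$ over $Z$. Write $j : E \to X$ for the inclusion and $p : E \to Z$ for the bundle projection, so that $\pi \circ j = \iota \circ p$, and set $\xi := c_1(\sO_E(1)) \in H^2(E,\bQ)$, the relative hyperplane class. First I would record two standard inputs. (i) Since $p : E \to Z$ is a $\bP^{r-1}$-bundle, the Leray--Hirsch theorem gives $H^*(E,\bQ) = \bigoplus_{i=0}^{r-1} p^* H^*(Z,\bQ)\cdot \xi^i$, together with the Grothendieck relation expressing $\xi^r$ as a $p^*$-combination of the lower powers $\xi^0,\dots,\xi^{r-1}$ with coefficients the Chern classes $c_k(N_{Z/Y})$. (ii) The self-intersection formula $j^* j_* = (\,\cup\, c_1(N_{E/X}))$ together with $N_{E/X} = \sO_E(-1)$ gives $j^* e = -\xi$, and hence $e^{i+1} = j_*\big((j^*e)^i\big) = (-1)^i j_*(\xi^i)$ by the projection formula.

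For the vector-space decomposition (\ref{iso1}) I would identify the class $z \otimes e^i$ (for $z \in H^*(Z,\bQ)$ and $1 \le i \le r-1$) with the pushforward $j_*\big(p^* z\cup (j^*e)^{i-1}\big)$, so that in particular $1 \otimes e^i = e^i$. The decomposition then asserts that the map
\[
\pi^* \oplus \bigoplus_{i=1}^{r-1}\big(z \mapsto z\otimes e^i\big) : H^*(Y,\bQ)\oplus\bigoplus_{i=1}^{r-1}H^{*-2i}(Z,\bQ)\longrightarrow H^*(X,\bQ)
\]
is an isomorphism. This is the conceptual core, and I expect it to be the main obstacle. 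I would prove it by comparing the long exact Gysin (Thom) sequences of the pairs $(X,E)$ and $(Y,Z)$: away from the center the blowup is an isomorphism $X\setminus E \simeq Y\setminus Z$, so the two sequences map to one another, and the discrepancy between them is governed exactly by the part of $H^*(E,\bQ)$ not pulled back from $Z$, namely the classes $p^*H^*(Z,\bQ)\cdot \xi^i$ with $1 \le i \le r-1$. Injectivity of $\pi^*$ follows from $\pi_*\pi^* = \mathrm{id}$ (a proper birational morphism of smooth varieties satisfies $\pi_*1 = 1$), and a dimension count via Leray--Hirsch shows the two subspaces are complementary and span, giving the direct sum.

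The cup product formula is then formal. Using the projection formula for $j_*$ and the identity $j^*\pi^* = p^*\iota^*$ coming from $\pi\circ j = \iota\circ p$, I get
\[
\pi^* y \cup (z\otimes e^i) = \pi^* y \cup j_*\big(p^* z\cup(j^*e)^{i-1}\big) = j_*\big(j^*\pi^* y \cup p^* z \cup (j^*e)^{i-1}\big) = (\iota^* y \cup z)\otimes e^i.
\]

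Finally, for the relation determining $e^r$, I would feed the Grothendieck relation through $j_*$. Equivalently, the excess-intersection (key) formula for the blowup square reads $\pi^*\iota_*(1) = j_*\big(c_{r-1}(\mathcal{E})\big)$, where $\mathcal{E} = p^* N_{Z/Y}/\sO_E(-1)$ is the rank $(r-1)$ excess bundle; since $c(\mathcal{E}) = p^*c(N_{Z/Y})\cdot(1-\xi)^{-1}$, one finds $c_{r-1}(\mathcal{E}) = \sum_{k=0}^{r-1} p^* c_k(N_{Z/Y})\,\xi^{r-1-k}$. Pushing this forward, separating the $k=0$ term (which produces $e^r$ through $e^{i+1} = (-1)^i j_*\xi^i$) from the remaining terms (which produce the classes $c_{r-i}(N_{Z/Y})\otimes e^i$), and collecting powers of $-1$, yields the stated relation after rearrangement. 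The only delicate point here is sign bookkeeping: it must be carried out consistently with the chosen normalization $j^*e = -\xi$, the projectivization convention defining $\sO_E(1)$, and the identification of $z\otimes e^i$ fixed above. With those conventions pinned down, the displayed formula for $(-1)^r e^r$ follows by solving the resulting identity for $e^r$.
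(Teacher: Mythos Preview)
The paper does not supply its own proof of this proposition: it is stated as a standard fact with the citation ``\cite[Chapter 4]{GH}'' (Griffiths--Harris) immediately preceding it, and no argument is given in the text. So there is no in-paper proof to compare against.

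Your proposal is a correct and standard derivation of this result, essentially the argument one finds in Griffiths--Harris or in Voisin's \emph{Hodge Theory and Complex Algebraic Geometry I}. The identification $E=\bP(N_{Z/Y})$, the Leray--Hirsch description of $H^*(E)$, the self-intersection formula giving $j^*e=-\xi$, and the excess-intersection/key formula for $\pi^*\iota_*(1)$ are exactly the right ingredients. The one place where your sketch is genuinely incomplete is the vector-space decomposition: comparing the Gysin sequences of $(X,E)$ and $(Y,Z)$ does yield the result, but the splitting and the identification of the cokernel require a bit more than a dimension count (one typically uses that $p_*(\xi^{r-1})=1$ to split the map $H^*(Z)\to H^*(E)$ and then a five-lemma/diagram-chase). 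You flag this as the ``main obstacle,'' which is accurate; filling it in is routine but should be done carefully. The sign bookkeeping you mention at the end is indeed the only subtle point in the $e^r$ relation, and your normalization $j^*e=-\xi$ together with $z\otimes e^i := j_*(p^*z\cup(j^*e)^{i-1})$ is consistent with the paper's displayed formula.
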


We abuse notation and suppress the symbols $\iota^*$ and $\pi^*$ in computations below.

\subsection{A $5$-dimensional example}
According to Proposition \ref{goodones}, if $X$ is a smooth projective variety whose Lefschetz algebra does not satisfy Poincar\'e duality, then the dimension of $X$ is at least $5$ and the Picard number of $X$ is at least $2$. We construct such an example of dimension $5$ and Picard number $2$. 

Let $C \subseteq \mathbb{P}^2$ be a smooth plane cubic curve,  let $Y$ be the projective space $\mathbb{P}^5$, and let $Z$ be the product $C \times \mathbb{P}^1$.
We write $\iota$ for the composition of inclusions
\[
  \raisebox{-0.5\height}{\includegraphics{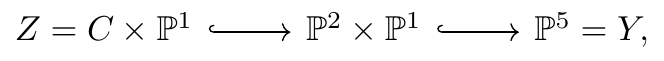}}
\]
where  the second map is the Segre embedding.
Let $X$ be the blowup of $Y$ along $Z$.

\begin{proposition}\label{ex1}
$\text{dim}\ L^2(X)=3$ and $\text{dim}\ L^3(X)=4$.
\end{proposition}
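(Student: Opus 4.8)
The plan is to read everything off the blowup description in Proposition \ref{blowup}, specialized to $Y=\mathbb{P}^5$, $Z=C\times\mathbb{P}^1$, and $r=\codim(Z)=3$. First I would fix generators for the cohomology of $Z$: writing $\alpha\in H^2(Z)$ for the class of $\{pt\}\times\mathbb{P}^1$ and $\beta\in H^2(Z)$ for the class of $C\times\{pt\}$, the K\"unneth theorem gives $H^2(Z)=\mathbb{Q}\alpha\oplus\mathbb{Q}\beta$ with $\alpha^2=\beta^2=0$, while $\alpha\beta$ generates $H^4(Z)\simeq\mathbb{Q}$ and $H^{>4}(Z)=0$. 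Since $C\subseteq\mathbb{P}^2$ is a cubic and the Segre map pulls the hyperplane class of $\mathbb{P}^5$ back to the sum of the hyperplane classes of $\mathbb{P}^2$ and $\mathbb{P}^1$, restricting to $Z$ yields $\iota^*h=3\alpha+\beta$, where $h$ is the hyperplane class. By Proposition \ref{blowup} we have $H^2(X)=\mathbb{Q}h\oplus\mathbb{Q}e$, so $L^1(X)=\langle h,e\rangle$ and each $L^k(X)$ is spanned by the degree-$k$ monomials in $h$ and $e$.

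Next I would compute $L^2(X)=\langle h^2,he,e^2\rangle$ inside the decomposition $H^4(X)=\pi^*H^4(\mathbb{P}^5)\oplus(H^2(Z)\otimes e)\oplus(H^0(Z)\otimes e^2)$. The multiplication rules give $h^2\in\pi^*H^4(\mathbb{P}^5)$, $he=(3\alpha+\beta)\otimes e\in H^2(Z)\otimes e$, and $e^2=1\otimes e^2\in H^0(Z)\otimes e^2$. These lie in three distinct summands and are each nonzero, hence linearly independent, so $\dim L^2(X)=3$. The telling feature is that the summand $H^2(Z)\otimes e$ is two-dimensional, yet $L^2$ meets it only in the line spanned by $\iota^*h\otimes e$.

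For $L^3(X)=\langle h^3,h^2e,he^2,e^3\rangle$ I would work inside the four-dimensional space $H^6(X)=\pi^*H^6(\mathbb{P}^5)\oplus(H^4(Z)\otimes e)\oplus(H^2(Z)\otimes e^2)$. The first three generators are immediate: $h^3$ spans $\pi^*H^6(\mathbb{P}^5)$, the product $h^2e=(3\alpha+\beta)^2\otimes e=6\alpha\beta\otimes e$ spans $H^4(Z)\otimes e$, and $he^2=(3\alpha+\beta)\otimes e^2$ lies in $H^2(Z)\otimes e^2$. Since $h^3$ and $h^2e$ already span the first two summands, the dimension of $L^3(X)$ is governed entirely by the projections of $he^2$ and $e^3$ to $H^2(Z)\otimes e^2$. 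The last relation of Proposition \ref{blowup} (with $r=3$) shows that this component of $e^3$ equals $c_1(N)\otimes e^2$, where $N=N_{Z/Y}$.

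The crux, and the step I expect to require the most care, is the Chern class computation for $N$. I would use $0\to T_Z\to T_{\mathbb{P}^5}|_Z\to N\to 0$, so that $c(N)=c(T_{\mathbb{P}^5}|_Z)/c(T_Z)$, with $c(T_{\mathbb{P}^5}|_Z)=(1+3\alpha+\beta)^6$ and, because $C$ is an elliptic curve, $c(T_Z)=1+2\beta$. Expanding modulo $H^{>4}(Z)=0$ and using $\alpha^2=\beta^2=0$ gives $c_1(N)=18\alpha+4\beta$. The decisive observation is that $c_1(N)=18\alpha+4\beta$ is \emph{not} proportional to $\iota^*h=3\alpha+\beta$, since $\begin{vmatrix}3&1\\18&4\end{vmatrix}=-6\neq0$. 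Therefore the images of $he^2$ and $e^3$ span all of $H^2(Z)\otimes e^2$, and together with $h^3$ and $h^2e$ they span the full four-dimensional $H^6(X)$. This gives $\dim L^3(X)=4$, so that $\dim L^2(X)=3\neq 4=\dim L^3(X)$, which is the asserted asymmetry (and, as $d=5$, exhibits the failure of Poincar\'e duality in the first nontrivial degree).
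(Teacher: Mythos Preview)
Your proof is correct and follows essentially the same route as the paper: both arguments use the blowup decomposition of Proposition~\ref{blowup}, identify $L^2(X)$ as the span of $h^2,he,e^2$ landing in distinct summands, and for $L^3(X)$ reduce to showing that the $H^2(Z)\otimes e^2$-component of $e^3$, namely $c_1(N_{Z/Y})$, is linearly independent from $\iota^*h$. The only difference is cosmetic: you compute $c_1(N)=18\alpha+4\beta$ explicitly and check the $2\times2$ determinant, whereas the paper writes $c_1(N)=c_1(T_Y|_Z)-c_1(T_Z)$, observes that $c_1(T_Y|_Z)=6\iota^*h$ is already proportional to $\iota^*h$ (hence absorbed into $V$), and then uses that $C$ is elliptic to see that $c_1(T_Z)$ is a pure multiple of $\beta$, hence not proportional to $3\alpha+\beta$. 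The paper's shortcut makes the role of the ellipticity of $C$ slightly more visible, but the two arguments are the same in substance.
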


\begin{proof}
We have $H^2(Z,\mathbb{Q})=\mathbb{Q} \hspace{0.5mm} a  \oplus \mathbb{Q} \hspace{0.5mm} b$,
where $a$ and $b$ are cohomology classes of $C\times \text{point}$ and $\text{point} \times \mathbb{P}^1$ respectively.
Writing $c$ for the cohomology class of a hyperplane in $Y$,
we have
\begin{align*}
H^0(X,\mathbb{Q})&= \mathbb{Q}\hspace{0.5mm} 1, \\
H^2(X,\mathbb{Q})&=  \mathbb{Q}\hspace{0.5mm} c^1 \oplus \big( \mathbb{Q}\hspace{0.5mm} 1 \big)e,\\
H^4(X,\mathbb{Q})&= \mathbb{Q}\hspace{0.5mm} c^2  \oplus\big( \mathbb{Q}\hspace{0.5mm} a  \oplus \mathbb{Q}\hspace{0.5mm} b \big)e \oplus \big(\mathbb{Q}\hspace{0.5mm} 1 \big) e^2,\\
H^6(X,\mathbb{Q})&= \mathbb{Q}\hspace{0.5mm} c^3 \oplus\big( \mathbb{Q}\hspace{0.5mm} ab \big) e \oplus \big( \mathbb{Q}\hspace{0.5mm} a  \oplus \mathbb{Q}\hspace{0.5mm} b \big) e^2,\\
H^8(X,\mathbb{Q})&= \mathbb{Q}\hspace{0.5mm} c^4 \oplus \big(\mathbb{Q}\hspace{0.5mm}ab \big)e^2,\\
H^{10}(X,\mathbb{Q})&=\mathbb{Q}\hspace{0.5mm} c^5.
\end{align*}
where $e$ is the cohomology class of the exceptional divisor in $X$.

The algebra $L^*(X)$ is generated by $c$ and $e$. The restriction  of $c$ to $Z$ is $a+3b$, and hence
\[
L^2(X)=\mathbb{Q}\hspace{0.5mm} c^2  \oplus \mathbb{Q} \hspace{0.5mm}ce \oplus \mathbb{Q}\hspace{0.5mm}  e^2=\mathbb{Q}\hspace{0.5mm} c^2  \oplus \mathbb{Q}( a  + 3b)e \oplus \mathbb{Q}\hspace{0.5mm}  e^2.
\]
This proves the first assertion.

We next show 
$L^3(X)=H^6(X,\mathbb{Q})$. It is enough to check that $e^3$ is not  in the subspace
\[
V=\mathbb{Q} c^3 \oplus \mathbb{Q} c^2e \oplus\mathbb{Q}ce^2=\mathbb{Q}\hspace{0.5mm} c^3 \oplus \mathbb{Q}(ab) e \oplus  \mathbb{Q}( a  + 3b) e^2 \subseteq H^6(X,\mathbb{Q}).
\]
According to Proposition \ref{blowup}, the following relation holds in the cohomology of $X$:
\[
e^3=-6c^3-c_2(N_{Z/Y})\hspace{0.5mm} e+c_1(N_{Z/Y})\hspace{0.5mm} e^2=-6c^3-c_2(N_{Z/Y})\hspace{0.5mm} e+c_1(T_Y)\hspace{0.5mm} e^2-c_1(T_Z)\hspace{0.5mm} e^2.
\]
Since $c_1(T_Y)\hspace{0.5mm} e^2$ is a multiple of $ce^2$ and   $c_2(N_{Z/Y})\hspace{0.5mm} e$ is a  multiple of $abe$, we have
\[
e^3=-c_1(T_Z)\hspace{0.5mm} e^2 \ \ \text{mod} \ \ V
\]
The tangent bundle of the elliptic curve $C$ is trivial, and therefore $c_1(T_Z)\hspace{0.5mm} e^2$ must be a multiple of $ae^2$.
It follows that $e^3$ is not contained in $V$. 
This proves the second assertion.
\end{proof}

\subsection{A $6$-dimensional example}

A simpler example can be found in dimension $6$.
Let $Y=\mathbb{P}^3\times \mathbb{P}^3$, and let $Z=\mathbb{P}^1\times \mathbb{P}^1\times \mathbb{P}^1$. We write $\iota$ for the composition of inclusions
\[
  \raisebox{-0.5\height}{\includegraphics{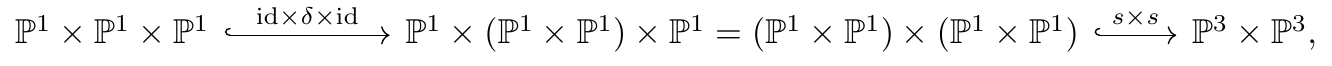}}
\]
where $\text{id}$ is the identity map, $\delta$ is the diagonal embedding, and $s$ is the Segre embedding.
Let $X$ be the blowup of $Y$ along $Z$.

\begin{proposition}
$\text{dim}\ L^2(X)=6$ and $\text{dim}\ L^4(X)=7$.
\end{proposition}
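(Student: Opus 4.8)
The plan is to follow exactly the template established in Proposition~\ref{ex1}, computing the cohomology of the blowup $X$ via Proposition~\ref{blowup} and then identifying the subalgebra $L^*(X)$ generated by the divisor classes. First I would record $H^2(Z,\mathbb{Q})$ for $Z=\mathbb{P}^1\times\mathbb{P}^1\times\mathbb{P}^1$: it is three-dimensional, spanned by the hyperplane classes $h_1,h_2,h_3$ of the three factors. The embedding $\iota$ is the composite of $\mathrm{id}\times\delta$ into $\mathbb{P}^1\times(\mathbb{P}^1\times\mathbb{P}^1)$ followed by $\mathrm{id}\times s$ into $\mathbb{P}^1\times\mathbb{P}^3=\mathbb{P}^1\times\mathbb{P}^3$, landing in $Y=\mathbb{P}^3\times\mathbb{P}^3$ after a further Segre-type inclusion of the first factor; I would carefully track how the two hyperplane classes $c_1,c_2$ of the two $\mathbb{P}^3$ factors of $Y$ restrict to $Z$. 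The codimension here is $r=3$, so Proposition~\ref{blowup} gives $H^*(X,\mathbb{Q})\simeq\pi^*H^*(Y,\mathbb{Q})\oplus\big(H^{*-2}(Z)\,e\oplus H^{*-4}(Z)\,e^2\big)$, and I would write out the graded pieces $H^2,H^4,H^6,H^8$ explicitly just as in the five-dimensional case.

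Next I would determine $L^2(X)$ and $L^4(X)$ as the degree-$2$ and degree-$4$ parts of the subalgebra generated by $c_1,c_2,e$. The class $\dim L^2(X)=6$ should come out of the six products $c_1^2,c_1c_2,c_2^2,c_1e,c_2e,e^2$ being linearly independent in $H^4(X,\mathbb{Q})$, which I would verify using the product rule $\pi^*y\cup(z\otimes e^i)=(\iota^*y\cup z)\otimes e^i$ and the fact that $c_1,c_2$ restrict to independent combinations of the $h_j$ on $Z$. To get $\dim L^4(X)=7$, I expect that $L^4(X)=H^8(X,\mathbb{Q})$, so the real content is showing that $H^8(X,\mathbb{Q})$ is seven-dimensional and that every one of its basis classes is actually a polynomial in $c_1,c_2,e$. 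By analogy with the five-dimensional argument, the decisive check is whether the top exceptional power $e^3$ (and the mixed classes in the $e^2$-summand) lie in the span of the divisor-generated classes; I would reduce $e^3$ modulo the span of $c_i^3, c_i^2 e, c_ic_j e, c_i e^2$ using the normal-bundle relation $(-1)^r e^r=\pi^*\iota_*(1)-\sum_{i=1}^{r-1}c_{r-i}(N_{Z/Y})\otimes(-e)^i$.

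The key computational input is the total Chern class of the normal bundle $N_{Z/Y}$, obtained from $c(T_Z)$ and $c(\iota^*T_Y)$ via $c(N_{Z/Y})=\iota^*c(T_Y)/c(T_Z)$. Since $Z$ is a product of $\mathbb{P}^1$'s, $c(T_Z)=\prod_j(1+2h_j)$, and $\iota^*c(T_Y)$ is computed from the restrictions of $c_1,c_2$ pulled back through the Segre and diagonal maps. I would extract $c_1(N_{Z/Y})$ and $c_2(N_{Z/Y})$ as explicit combinations of the $h_j$, then substitute into the relation above to express $e^3$ modulo the divisor-generated subspace. The main obstacle, exactly as in Proposition~\ref{ex1}, will be verifying that some specific class in $H^8(X,\mathbb{Q})$ that is forced to appear in $L^4(X)$ is \emph{not} already a combination of the naive products---equivalently, confirming the asymmetry $\dim L^4(X)>\dim L^2(X)$ rather than a coincidental collapse. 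I expect the argument to hinge on a low-degree term of $c_2(N_{Z/Y})$ or a product $h_ih_j$ surviving in the $e^2$-component; the bookkeeping of which monomials in the $h_j$ are realized by $L^*(X)$ versus which span all of $H^*(Z)$ is where the real care is needed, though each individual Chern-class computation is routine.
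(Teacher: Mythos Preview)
Your plan is correct and would succeed, and for $L^2(X)$ it matches the paper exactly: the six monomials $c_1^2,c_1c_2,c_2^2,c_1e,c_2e,e^2$ are shown to be linearly independent in $H^4(X,\mathbb{Q})$ using the product rule and the restrictions $\iota^*c_1=h_1+h_2$, $\iota^*c_2=h_2+h_3$ (the middle $\mathbb{P}^1$ is the one embedded diagonally, so track the Segre maps accordingly).

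For $L^4(X)$, however, the paper takes a much shorter path than the one you outline. You propose to reduce $e^3$ (and implicitly $e^4$) via the normal-bundle relation, computing $c(N_{Z/Y})$ from $c(T_Y)$ and $c(T_Z)$; this works but is unnecessary here. The paper never touches $e^3$ at all. Instead it observes that $H^8(X,\mathbb{Q})$ is already spanned by seven monomials of degree at most two in $e$: the three classes $c_1^3c_2,\,c_1^2c_2^2,\,c_1c_2^3$ handle the $\pi^*H^8(Y)$ summand, $c_1^2c_2e=2h_1h_2h_3\,e$ handles the one-dimensional $H^6(Z)e$ summand, and $c_1^2e^2=2h_1h_2e^2$, $c_2^2e^2=2h_2h_3e^2$, $c_1c_2e^2=(h_1h_2+h_1h_3+h_2h_3)e^2$ span the three-dimensional $H^4(Z)e^2$ summand. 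Thus $L^4(X)=H^8(X,\mathbb{Q})$ with no Chern-class computation for $N_{Z/Y}$ whatsoever. The moral difference from Proposition~\ref{ex1} is that there the restriction $\iota^*c=a+3b$ failed to generate all of $H^2(Z)$, forcing the normal-bundle argument; here the two restrictions $h_1+h_2$ and $h_2+h_3$ already generate $H^*(Z)$ multiplicatively, so the $y_iy_je^k$ products with $k\le 2$ suffice.
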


\begin{proof}
Write
$y_1$, $y_2$ for the cohomology classes 
\[
\text{cl}(\mathbb{P}^2\times\mathbb{P}^3),\ 
\text{cl}(\mathbb{P}^3\times\mathbb{P}^2) \in H^2(\mathbb{P}^3 \times\mathbb{P}^3,\mathbb{Q}),
\]
and write $z_1$, $z_2$, $z_3$ for the cohomology classes 
\[
\text{cl}(\mathbb{P}^0\times\mathbb{P}^1 \times \mathbb{P}^1),\ 
\text{cl}(\mathbb{P}^1\times\mathbb{P}^0 \times \mathbb{P}^1), \ 
\text{cl}(\mathbb{P}^1\times\mathbb{P}^1 \times \mathbb{P}^0) \in H^2(\mathbb{P}^1\times\mathbb{P}^1 \times \mathbb{P}^1,\mathbb{Q}).
\]
Note  that $\iota^*y_1=z_1+z_2$ and
$\iota^*y_2=z_2+z_3$.
According to Proposition \ref{blowup}, 
\begin{align*}
H^4(X,\mathbb{Q})&=\mathbb{Q} \hspace{0.5mm} y_1^2 \oplus \mathbb{Q} \hspace{0.5mm} y_1y_2 \oplus \mathbb{Q} \hspace{0.5mm} y_2^2
 \oplus\big( \mathbb{Q} \hspace{0.5mm} z_1 \oplus \mathbb{Q} \hspace{0.5mm} z_2 \oplus \mathbb{Q} \hspace{0.5mm} z_3 \big)e
 \oplus \big(\mathbb{Q}\hspace{0.5mm}1\big) e^2 \ \text{and}\\[3pt]
H^8(X,\mathbb{Q})&= \mathbb{Q} \hspace{0.5mm} y_1^3y_2 \oplus \mathbb{Q} \hspace{0.5mm} y_1^2 y_2^2 \oplus \mathbb{Q} \hspace{0.5mm} y_1y_2^3 
  \oplus \big(\mathbb{Q} \hspace{0.5mm} z_1z_2z_3\big)e \oplus \big(\mathbb{Q} \hspace{0.5mm} z_2z_3 \oplus \mathbb{Q} \hspace{0.5mm} z_1z_3 \oplus \mathbb{Q} \hspace{0.5mm} z_1z_2 \big)e^2,
\end{align*}
where $e$ is the cohomology of the exceptional divisor in $X$.

The vector space $L^2(X)$ is spanned by  the cohomology classes $y_1^2$, $y_1y_2$, $y_2^2$, $e^2$, 
\[
y_1e= z_1e+z_2e,\ \ \text{and} \ \  y_2e=z_2e+z_3e.
\]
From the above description of $H^4(X,\mathbb{Q})$, we see that the six elements are linearly independent. This proves the first assertion.

We next check $L^4(X)=H^8(X,\mathbb{Q})$.
Note that $L^4(X)$ contains $y_1^3y_2$, $y_1^2y_2^2$, $y_1y_2^3$, and
\[
y_1^2y_2e=2z_1z_2z_3e,\quad
y_1^2e^2=2z_1z_2e^2, \quad
y_2^2e^2=2z_2z_3e^2,\quad
y_1y_2e^2=(z_1z_2+z_1z_3+z_2z_3)e^2.
\]
From the above description of $H^8(X,\mathbb{Q})$, we see that the seven elements span $H^8(X,\mathbb{Q})$.
This proves the second assertion.
\end{proof}

\subsection{An $8$-dimensional example}

Let $X$ be the $8$-dimensional partial flag variety
\[
X=\Bigg\{0 \subseteq V_2 \subseteq V_3 \subseteq \mathbb{C}^5 \mid \text{dim}\ V_2=2, \  \text{dim}\ V_3=3\Bigg\}.
\]
We show that the Lefschetz algebra of $X$ does not satisfy Poincar\'e duality.
This is in contrast with the case of Grassmannians and full flag varieties.

\begin{proposition}
$\text{dim}\ L^2(X)=3$ and $\text{dim}\ L^6(X)=4$. 
\end{proposition}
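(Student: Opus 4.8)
The plan is to exploit the fibration structure of $X$. Sending a flag $(V_2\subseteq V_3)$ to its three-dimensional part gives a morphism $q\colon X\to G$, where $G=\mathrm{Gr}(3,5)$ is the Grassmannian of three-planes in $\mathbb{C}^5$. Since a two-plane $V_2\subseteq V_3$ is the same datum as a one-dimensional quotient $V_3\twoheadrightarrow V_3/V_2$, this realizes $X$ as the projective bundle $\mathbb{P}(\mathcal{V}_3)$ of the tautological subbundle $\mathcal{V}_3$ on $G$, with tautological quotient line bundle $\mathcal{O}(1)=\mathcal{V}_3/\mathcal{V}_2$. I would write $\xi=c_1(\mathcal{O}(1))$ for the relative hyperplane class and $\beta=q^*\sigma_1$ for the pullback of the Schubert divisor $\sigma_1=c_1(\mathbb{C}^5/\mathcal{V}_3)\in H^2(G,\mathbb{Q})$. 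Because $X$ is rational homogeneous of Picard number two, its Lefschetz algebra is generated by $L^1(X)=H^2(X,\mathbb{Q})=\mathbb{Q}\,\xi\oplus\mathbb{Q}\,\beta$, so that $L^*(X)=\mathbb{Q}[\xi,\beta]\subseteq H^*(X,\mathbb{Q})$.

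Next I would record the additive structure. The Betti numbers of $G$ are $(1,1,2,2,2,1,1)$, with $\sigma_{22},\sigma_{211}$ a basis of $H^8(G,\mathbb{Q})$ and $\sigma_{221},\sigma_{222}$ spanning $H^{10}(G,\mathbb{Q}),H^{12}(G,\mathbb{Q})$. The projective bundle formula
\[
H^*(X,\mathbb{Q})=\bigoplus_{i=0}^{2}\xi^{\,i}\,q^*H^{*-2i}(G,\mathbb{Q})
\]
then gives $\dim H^4(X,\mathbb{Q})=\dim H^{12}(X,\mathbb{Q})=4$. The equality $\dim L^2(X)=3$ is now immediate: the three generators $\xi^2$, $\xi\beta$, and $\beta^2=q^*(\sigma_2+\sigma_{11})$ of $L^2(X)$ lie in the distinct summands $\xi^2q^*H^0$, $\xi q^*H^2$, $q^*H^4$ of the decomposition of $H^4(X,\mathbb{Q})$, hence are linearly independent.

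For $\dim L^6(X)=4$ I would first compute the Gysin pushforwards $q_*\xi^k$. Since the Segre classes of $\mathcal{V}_3$ are the Chern classes of $\mathbb{C}^5/\mathcal{V}_3$, one gets $q_*\xi^{2+j}=\sigma_j$, i.e. $q_*\xi^2=\sigma_0$, $q_*\xi^3=\sigma_1$, $q_*\xi^4=\sigma_2$, and $q_*\xi^k=0$ for $k<2$; under the identification of the top graded piece $\xi^2q^*H^8(G,\mathbb{Q})$ with $H^8(G,\mathbb{Q})$, the map $q_*$ extracts the leading $\xi^2$-component. Using the Schubert computations $\sigma_1^6=5\sigma_{222}$, $\sigma_1^5=5\sigma_{221}$, $\sigma_1^4=2\sigma_{22}+3\sigma_{211}$, and $\sigma_2\sigma_1^2=\sigma_{22}+\sigma_{211}$, I would then exhibit the four classes
\begin{align*}
\beta^6 &= 5\,q^*\sigma_{222}, & \xi\beta^5 &= 5\,\xi\,q^*\sigma_{221},\\
q_*(\xi^2\beta^4) &= 2\sigma_{22}+3\sigma_{211}, & q_*(\xi^4\beta^2) &= \sigma_{22}+\sigma_{211}.
\end{align*}
The first lies in the $\xi^0$-level and the second in the $\xi^1$-level, both nonzero; the last two have leading $\xi^2$-components $(2,3)$ and $(1,1)$ in the basis $\sigma_{22},\sigma_{211}$, which are independent since $\det\left(\begin{smallmatrix}2&3\\1&1\end{smallmatrix}\right)=-1\neq0$. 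Being in triangular position for the $\xi$-degree filtration, these four elements of $L^6(X)$ are linearly independent, so $L^6(X)=H^{12}(X,\mathbb{Q})$ and $\dim L^6(X)=4$. Combined with $\dim L^2(X)=3$, Proposition~\ref{PropositionEquivalence} then yields the failure of Poincar\'e duality.

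The main obstacle is this last step, and a genuine subtlety hides in it. The naive candidate $\xi^3\beta^3$ has leading $\xi^2$-component $q_*(\xi^3\beta^3)=\sigma_1\!\cdot\!\sigma_1^3=\sigma_1^4=2\sigma_{22}+3\sigma_{211}$, which is \emph{proportional} to that of $\xi^2\beta^4$; one must pass to $\xi^4\beta^2$, where the appearance of $q_*\xi^4=\sigma_2$ produces the linearly independent vector $(1,1)$. Conceptually this is exactly where the asymmetry lives: in degree $4$ the summand $q^*H^4(G,\mathbb{Q})$ is two-dimensional but meets $L^2(X)$ only in the line $\mathbb{Q}\,\beta^2$, whereas in degree $12$ the top summand $\xi^2q^*H^8(G,\mathbb{Q})$ is entirely saturated by products of divisors. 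Organizing the Schubert calculus on $\mathrm{Gr}(3,5)$ so that the $\xi$-degree filtration cleanly separates the four basis classes, and verifying the single nonvanishing determinant, is the only real computation.
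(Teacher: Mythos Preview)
Your argument is correct and follows essentially the same strategy as the paper: realize $X$ as a $\mathbb{P}^2$-bundle over a Grassmannian, use the Leray--Hirsch decomposition to read off $\dim L^2(X)=3$, and then do a Schubert computation in degree $12$ to see that four divisor monomials are independent. The paper projects to $\mathrm{Gr}(2,5)$ via $V_2$ and works with the explicit relation $\zeta^3+\sigma_1\zeta^2+\sigma_2\zeta+\sigma_3=0$ to expand $\zeta^4$; you project to $\mathrm{Gr}(3,5)$ via $V_3$ and instead extract leading $\xi^2$-coefficients by Gysin pushforward. These are dual presentations of the same calculation, and your filtration-by-$\xi$-degree packaging is arguably cleaner than the paper's ``show $\yng(1)^2\zeta^4\notin V$'' formulation. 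One small slip: your bundle is $\mathbb{P}(\mathcal{V}_3^\vee)$ in Fulton's line convention, so the pushforward formula reads $q_*\xi^{2+j}=s_j(\mathcal{V}_3^\vee)=(-1)^j s_j(\mathcal{V}_3)$; thus $q_*\xi^3=-\sigma_1$ rather than $\sigma_1$. This is harmless, since the only pushforward you actually use in the independence argument is $q_*\xi^4=\sigma_2$, which is unaffected.
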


\begin{proof}
We use standard facts and notations in Schubert calculus \cite{Eisenbud-Harris}.
Let $Y$ be the Grassmannian variety parametrizing $2$-dimensional subspaces of $\mathbb{C}^5$.
The Schubert classes form a basis of the cohomology of $Y$:
\begin{align*}
H^*(Y,\mathbb{Q})=\mathbb{Q}\ \yng(3,3) \oplus\mathbb{Q}\ \yng(3,2) \oplus\mathbb{Q}\ \yng(3,1) \oplus\mathbb{Q}\ \yng(2,2) \oplus\mathbb{Q}\ \yng(3)\oplus\mathbb{Q}\ \yng(2,1)  \oplus\mathbb{Q}\ \yng(1,1) \oplus\mathbb{Q}\ \yng(2) \oplus\mathbb{Q}\ \yng(1) \oplus \mathbb{Q}\ 1.
\end{align*}
We write $\mathcal{Q}$ for the universal quotient bundle on $Y$. 
Since $X$  is the projectivization $\mathbb{P}\mathcal{Q}$,
\begin{align*}
H^4(X,\mathbb{Q})&= \mathbb{Q}\ \yng(1,1) \oplus \mathbb{Q} \ \yng(2) \oplus \mathbb{Q}\ \yng(1)\ \zeta \oplus \mathbb{Q}\ \zeta^2 \ \text{and}\\[3pt]
H^{12}(X,\mathbb{Q})&= \mathbb{Q}\ \yng(3,3) \oplus \mathbb{Q}\  \yng(3,2)\ \zeta \oplus \mathbb{Q}\  \yng(3,1)\ \zeta^2 \oplus \mathbb{Q}\  \yng(2,2)\ \zeta^2,
\end{align*}
where $\zeta$ is the first Chern class of the line bundle  $\mathcal{O}_{\mathbb{P}\mathcal{Q}}(1)$.

The Lefschetz algebra of $X$ is generated by $\yng(1)$ and $\zeta$, and therefore
\[
L^2(X)= \mathbb{Q}\ \yng(1)^2 \oplus \mathbb{Q} \ \yng(1) \ \zeta \oplus  \mathbb{Q} \ \zeta^2
= \mathbb{Q}\ \Big( \yng(2)+\yng(1,1)\ \Big) \oplus \mathbb{Q} \ \yng(1) \ \zeta \oplus  \mathbb{Q} \ \zeta^2.
\]
This proves the first assertion.

We now show that $L^6(X)=H^{12}(X,\mathbb{Q})$.
For this we use four elements
\[
\yng(1)^6, \ \yng(1)^5\ \zeta, \ \yng(1)^4\ \zeta^2,\  \yng(1)^2\ \zeta^4 \in L^6(X).
\]
It is enough to prove that the last element $\yng(1)^2\ \zeta^4$ is not contained in the subspace
\[
V=\mathbb{Q}\ \yng(1)^6 \oplus \mathbb{Q} \ \yng(1)^5\ \zeta \oplus \mathbb{Q}\  \yng(1)^4\ \zeta^2
=
\mathbb{Q}\ \yng(3,3) \oplus \mathbb{Q} \ \yng(3,2)\ \zeta \oplus \mathbb{Q} \Big( 2\ \yng(2,2)+3\ \yng(3,1)\ \Big)  \zeta^2.
\]
According to \cite[Chapter 5]{Eisenbud-Harris}, the Chern classes of $\mathcal{Q}$ are 
\[
c_0(\mathcal{Q})=1, \ \
c_1(\mathcal{Q})=\yng(1), \ \ 
c_2(\mathcal{Q})=\yng(2),  \ \ 
c_3(\mathcal{Q})=\yng(3) \ \in \  H^*(Y,\mathbb{Q}).
\]
In other words,
$\zeta^3+ \yng(1)\ \zeta^2+\yng(2)\ \zeta +\yng(3)=0$ in the cohomology of $X=\mathbb{P}\mathcal{Q}$.
It follows that
\begin{align*}
\zeta^4&= -\yng(3)\ \zeta-\yng(2)\ \zeta^2- \yng(1)\ \zeta^3\\
&=  -\yng(3)\ \zeta-\yng(2)\ \zeta^2-\yng(1)\ \Big( -\yng(3)-\yng(2)\ \zeta-\yng(1)\ \zeta^2\Big)   =  \yng(3,1)+\yng(2,1)\ \zeta+ \yng(1,1)\ \zeta^2, 
\end{align*}
and therefore
\[
\yng(1)^2\ \zeta^4=\yng(1)^2\ \Big(\   \yng(3,1) +\yng(2,1)\ \zeta+\yng(1,1)\ \zeta^2  \ \Big)=\yng(3,3) + 2\ \yng(3,2)\ \zeta+\Big( \ \yng(2,2)+\yng(3,1)\ \Big)  \zeta^2 \notin V.
\]
This proves the second assertion.
\end{proof}

\subsection*{Acknowledgements}
June Huh thanks H\'el\`ene Esnault for valuable conversations.
This research was conducted when the authors were visiting Korea Institute for Advanced Study. We thank KIAS for excellent working conditions.
June Huh was supported by a Clay Research Fellowship and NSF Grant DMS-1128155.

\end{document}